\let\@@pmod\mod
\DeclareRobustCommand{\mod}{\@ifstar\@pmods\@@pmod}
\def\@pmods#1{\mkern4mu({\operator@font mod}\mkern 6mu#1)}
\definecolor{blue}{rgb}{0,0,1}
\definecolor{red}{rgb}{1,0,0}
\definecolor{green}{rgb}{0,.6,.2}
\definecolor{purple}{rgb}{1,0,1}
\long\def\red#1\endred{\textcolor{red}{#1}}
\long\def\blue#1\endblue{\textcolor{blue}{#1}}
\long\def\purple#1\endpurple{\textcolor{purple}{ #1}}
\long\def\green#1\endgreen{\textcolor{green}{#1}}
\newcommand{\sm}{\left(\begin{smallmatrix}}
\newcommand{\esm}{\end{smallmatrix}\right)}
\newcommand{\bpm}{\begin{pmatrix}}
\newcommand{\ebpm}{\end{pmatrix}}
\newcommand{\Z}{\mathbb{Z}}
\newcommand{\Q}{\mathbb{Q}}
\newcommand{\N}{\mathbb{N}}
\DeclareMathOperator{\ord}{ord}
\newtheorem{theorem}{Theorem}
\newtheorem{lemma}[theorem]{Lemma}
\newtheorem{proposition}[theorem]{Proposition}
\theoremstyle{remark}
\newtheorem*{remarks}{Remarks}
\numberwithin{theorem}{section}
\numberwithin{equation}{section}
\newcommand{\full}{{\rm full}}
\newcommand{\new}{{\rm new}}
\renewcommand{\min}{{\rm min}}
\newcommand{\n}{\star}
\begin{document}
\title{Dimensions of spaces of modular forms}
\author{Andrew R. Booker}
\email{andrew.booker@bristol.ac.uk}
\author{Min Lee}
\email{min.lee@bristol.ac.uk}
\address{School of Mathematics, University of Bristol,
Woodland Road, Bristol, BS8 1UG}
\begin{abstract}
We prove a conjecture of Ross concerning the value distribution of $\dim S_2^\new(\Gamma_0(N))$ for $N\in\N$, as well as analogous results for general weight $k\in2\N$ and the full and twist-minimal spaces $S_k(\Gamma_0(N))$, $S_k^\min(\Gamma_0(N))$.
\end{abstract}
\maketitle

\section{Introduction}
In \cite[Conjecture~27]{Martin}, Martin conjectured that every non-negative integer can be expressed as $\dim S_2^\new(\Gamma_0(N))$ for some $N\in\N$. 
Recently, Ross \cite{Ross1} disproved this conjecture, showing that dimension $67846$ is not attained, and made the counter-conjecture \cite[Conjecture~6.1]{Ross2} that the set of dimensions has density zero in the non-negative integers. In this paper we prove a general form of Ross' conjecture that applies to all weights $k\in2\N$ and includes the full and twist-minimal spaces, $S_k(\Gamma_0(N))$ and $S_k^\min(\Gamma_0(N))$.
Our main tool is the value distribution of the Euler totient and similar multiplicative functions, whose study was begun by Pillai \cite{Pillai} and Erd\H{o}s \cite{Erdos}, and perfected by Ford \cite{Ford1,Ford2}.

For $k\in2\N$, let
\begin{align*}
d_k^\full(N)=\dim S_k(\Gamma_0(N)),\quad
d_k^\new(N)=\dim S_k^\new(\Gamma_0(N)),\quad
d_k^\min(N)=\dim S_k^\min(\Gamma_0(N))
\end{align*}
and set
\[
D_k^\n(x)=\#\left\{d_k^\n(N):N\in\N,\,d_k^\n(N)\le\frac{k-1}{12}x\right\}
\quad\text{for }\n\in\{\full,\new,\min\}.
\]
Our precise result is the following.
\begin{theorem}\label{thm:main}
Uniformly for $k\in2\N$ and $\n\in\{\full,\new,\min\}$, we have
\[
D_k^\n(x)=\frac{x}{\log{x}}
\exp\!\left(C\log^2\!\left(\frac{\log\log{x}}{\log\log\log{x}}\right)+O\bigl(\log\log\log{x}\bigr)\right)
\quad\text{for }x\ge16,
\]
where $C=0.8178146\ldots$ is the constant defined in \cite[(1.5)]{Ford2}.
\end{theorem}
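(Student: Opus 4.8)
The plan is to reduce the theorem to the value distribution of a single family of multiplicative functions and then transfer Ford's analysis of the Euler totient.

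\textbf{Step 1 (dimension formulas).} From the classical dimension formula for $S_k(\Gamma_0(N))$ in terms of the index $[\SL_2(\Z):\Gamma_0(N)]=\psi(N)=N\prod_{p\mid N}(1+p^{-1})$, the number of cusps, and the numbers of elliptic points of orders $2$ and $3$, one obtains for a suitable explicit multiplicative function $\psi^\n$ a decomposition
\[
d_k^\n(N)=\tfrac{k-1}{12}\,\psi^\n(N)+e_k^\n(N).
\]
Here $\psi^\full=\psi$; for the new space the oldform identity $d_k^\full(N)=\sum_{M\mid N}\tau(N/M)\,d_k^\new(M)$ ($\tau$ the divisor function) gives by M\"obius inversion $\psi^\new=(\mu*\mu)*\psi$, so $\psi^\new(p)=p-1$, $\psi^\new(p^2)=p^2-p-1$, etc.; and $\psi^\min$ is the analogue arising from the twist-minimal dimension formula, again with $\psi^\min(p)=p-1$ (a newform of prime level is automatically twist-minimal). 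Note that for squarefree $\ell$ one has $\psi^\new(\ell)=\psi^\min(\ell)=\varphi(\ell)$, while $\psi^\full(\ell)=\prod_{p\mid\ell}(p+1)$. The features needed are: each $\psi^\n$ is multiplicative with $\psi^\n(p)=p+O(1)$ and $\psi^\n(p^j)=p^j+O(p^{j-1})$; the correction $e_k^\n(N)$ is $O(k)$ times the number of cusps of $\Gamma_0(N)$, hence $\ll_\varepsilon k\,N^\varepsilon$ when $N$ is squarefree (and $\ll_\varepsilon k\,N^{1/2+\varepsilon}$ in general); after dividing by $(k-1)/12$ the $k$-dependence of $e_k^\n$ is through bounded coefficients, with $k=2$ causing no difficulty; and $\psi^\n(N)\gg N/\log\log N$, so $\psi^\n(N)\le x$ forces $N\ll x\log\log x$.

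\textbf{Step 2 (reduction to a multiplicative problem).} It suffices to prove: for multiplicative $g$ with $g(p)=p+O(1)$ and $g(p^j)=p^j+O(p^{j-1})$, the quantity $\#\{g(n):g(n)\le x\}$ satisfies the asymptotic of the theorem with the same $C$; together with the robustness that replacing $g(n)$ by $\tfrac{k-1}{12}g(n)+e_k^\n(n)$ alters this count by at most a factor $\exp\bigl(O(\log\log\log x)\bigr)$, uniformly in $k$. For robustness one first discards the $n$ whose $2$- or $3$-adic part exceeds $x^{1/100}$ and the $n$ with a large square factor: these are sparse enough that the values they contribute number $O(x^{0.99})=o(x/\log x)$. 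For the remaining, essentially squarefree, $n$ one has $e_k^\n(n)\ll_\varepsilon k\,n^\varepsilon$, and — crucially — Ford's extremal and typical preimages satisfy $\omega(n)\ll\log\log\log x$, so on such $n$ the quantity $e_k^\n(n)$ is $\ll\tau(n)^{O(1)}=\exp\bigl(O(\log\log\log x)\bigr)$ and, once a residue class modulo $12$ is fixed for each prime factor, is determined by $\omega(n)$ together with $\le\exp\bigl(O(\log\log\log x)\bigr)$ further bits. The upper bound then follows by bounding the image of $n\mapsto\tfrac{k-1}{12}g(n)+e_k^\n(n)$ by the size of its domain; for the lower bound the corrections perturb the construction below only at the $\exp\bigl(O(\log\log\log x)\bigr)$ scale, which Ford's collision estimate absorbs.

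\textbf{Step 3 (transferring Ford).} For the upper bound on $\#\{g(n):g(n)\le x\}$, each such value has a preimage $n\ll x\log\log x$, and Ford's structural results show that, outside a set of values of size $O\!\bigl(\tfrac{x}{\log x}\exp(C\log^2(\tfrac{\log\log x}{\log\log\log x})-c\log\log\log x)\bigr)$, the value has a preimage of ``normal anatomy'' (prescribed numbers of prime factors in each logarithmic size-class, in particular $\omega(n)\ll\log\log\log x$); counting these preimages gives the bound. For the lower bound one mimics Ford's construction: fix residue classes modulo a fixed modulus coprime to $6$; let $\ell$ run over Ford's set $\mathcal L$ of normal ``lower parts'' (squarefree, with $\omega(\ell)\ll\log\log\log x$ and all prime factors in the fixed classes), and for each $\ell$ let $p_0\equiv1\pmod{12}$ run over primes with $d_k^\n(p_0\ell)\le\tfrac{k-1}{12}x$. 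Since $d_k^\n(p_0\ell)=\tfrac{k-1}{12}(p_0+O(1))\psi^\n(\ell)+e_k^\n(p_0\ell)$ is strictly increasing in $p_0$ with $e_k^\n(p_0\ell)$ independent of $p_0$ (the residue conditions fix the counts of cusps and elliptic points), each $\ell$ contributes $\gg x/(\psi^\n(\ell)\log x)$ distinct values, and Ford's bound on how often such products (nearly) coincide limits the repetition across distinct $\ell$ to the stated scale. Summing yields $\sum_{\ell\in\mathcal L}1/\psi^\n(\ell)=\exp\!\bigl(C\log^2(\tfrac{\log\log x}{\log\log\log x})+O(\log\log\log x)\bigr)$ on redoing Ford's sum with $\psi^\n$ in place of $\varphi$ (identical for $\n\in\{\new,\min\}$; the $p+1$-variant for $\n=\full$). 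Throughout, Ford's input ``$d\mid p-1$'' is replaced by ``$p$ in a fixed class mod $d$'', with the same density $1/\varphi(d)$ by Siegel--Walfisz and Bombieri--Vinogradov, so the optimisation defining $C$ is unchanged.

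\textbf{Main obstacle.} I expect the crux to be the interface of Step 2: ensuring the non-multiplicative cusp and elliptic corrections $e_k^\n$ neither create nor destroy more than $\exp\bigl(O(\log\log\log x)\bigr)$ distinct values. This is what forces one to invoke Ford's results in the refined form pinning down $\omega(n)\ll\log\log\log x$ for the relevant $n$, rather than quoting his theorem as a black box, and to carry uniformity in $k$ (and the case $k=2$) through every step; re-deriving Ford's structural theorem and his sum for the shifted functions $\psi^\n$ is laborious but introduces no new analytic difficulty.
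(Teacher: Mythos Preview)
Your overall strategy is correct---decompose $d_k^\n(N)=\tfrac{k-1}{12}\psi^\n(N)+e_k^\n(N)$, invoke Ford for $\psi^\n$, and argue that $e_k^\n$ is harmless---but the execution in Step~2 has a genuine gap, and Step~3 is an unnecessary detour.

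\textbf{The gap.} You restrict attention to $n$ with $\omega(n)\ll\log\log\log x$, appealing to the anatomy of Ford's extremal preimages. But Ford's structural statement concerns preimages of \emph{values of $\psi^\n$}, not of $d_k^\n$: given a value $w=d_k^\n(n)$ there is no reason any of its preimages should be Ford-normal, and in fact almost every $n$ with $\psi^\n(n)\le x$ has $\omega(n)\sim\log\log x\gg\log\log\log x$. So the $n$ you discard constitute essentially the entire domain, and you give no bound on how many $d_k^\n$-values they produce. The fix is to \emph{relax} the threshold, not tighten it: restrict to $\omega(N)\le 3\log\log x$. Then the \emph{number of $N$} (hence of values) with $\psi^\n(N)\le x$ and $\omega(N)>3\log\log x$ is $O(x/\log x)$ by a Hardy--Ramanujan tail estimate, while on the remaining $N$ the correction $e_k^\n(N)$---being determined by the counts of prime factors in each residue class modulo~$12$, together with $\mu(N)$ and (for $\n=\full$) the squarefull part---takes only $O((\log\log x)^{O(1)})=\exp(O(\log\log\log x))$ values. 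This gives $D_k^\n(x)\ll(\log\log x)^{O(1)}V_{\psi^\n}(\tfrac32 x)+x/\log x$, and the same reasoning in reverse gives $V_{\psi^\n}(x/2)\ll(\log\log x)^{O(1)}D_k^\n(x)+x/\log x$, so the lower bound also follows without reconstructing Ford's extremal sets. Your ``main obstacle'' is thus a red herring: one does not need $\omega(n)\ll\log\log\log x$, and trying to enforce it is what breaks the argument.

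\textbf{The detour.} You propose to re-derive Ford's theorem for each $\psi^\n$, invoking Siegel--Walfisz and Bombieri--Vinogradov to redo the optimisation defining~$C$. This is unnecessary: \cite[Theorem~14]{Ford2} is stated for any multiplicative $f$ with $f(p)-p$ lying in a fixed finite set not containing $0$ and with $\sum_{n\text{ squarefull}}n^\delta/f(n)<\infty$ for some $\delta>0$; all three $\psi^\n$ satisfy these hypotheses directly (with $f(p)-p\in\{\pm1\}$), so $V_{\psi^\n}(x)\asymp x\rho(x)$ is a black-box citation. The only genuine extra work is for $\n=\full$, where the cusp term $\nu_\infty^\full(N)$ can be as large as $\sqrt{N}$; there one separates off the squarefull part $H(N)$, bounds the contribution of $H(N)>s$ trivially, and shows that for each fixed squarefull $H(N)\le s$ the correction again takes only polynomially many values. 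Your remark about discarding $n$ with a large square factor is the right instinct, but would need to be carried through in this form.
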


\begin{remarks}\
\begin{enumerate}
\item If one instead fixes $N$ and varies $k\in2\N$ then Ross \cite[Theorem~1.3]{Ross2} showed that $S_k(\Gamma_0(N))$ and $S_k^\new(\Gamma_0(N))$ do attain every dimension for some small values of $N$. The proof extends easily to $S_k^\min(\Gamma_0(N))$, and more generally one can see that $\bigl\{d_k^\n(N):k\in2\N\bigr\}$ has positive density for every $N$.
\item With appropriate modifications our proof could be adapted to the spaces $S_k^\n(\Gamma_0(N),\chi)$, where $k\ge2$ is fixed, $\chi\pmod{q}$ is a fixed primitive character satisfying $\chi(-1)=(-1)^k$, and $N$ varies over $q\N$.

For $k=1$ the question is much more subtle, and for some $\chi$ of small conductor it seems plausible that $\bigl\{\dim S_1^\n(\Gamma_0(N),\chi):N\in q\N\bigr\}$
contains every non-negative integer. These spaces are expected to be dominated by dihedral forms, so one is led to study the value distribution of class numbers $h(\Delta)$ for fundamental discriminants $\Delta<0$. Conjectures in \cite{Soundararajan,HJKMP} suggest that a given class number $h$ occurs with multiplicity $\gg\frac{h}{\log{h}}$.
\end{enumerate}
\end{remarks}

\subsection*{Acknowledgements}
We thank Kevin Ford for clarifying some points about \cite[Theorem~14]{Ford2}.

\section{Dimension formulae}
Let us first recall the dimension formulae for $S_k(\Gamma_0(N))$ and $S_k^\new(\Gamma_0(N))$,
as computed by Martin \cite{Martin}.
For $k\in2\N$, define 
\[
c_2(k) = -\frac14\left(\frac{-4}{k-1}\right),\quad
c_3(k) = -\frac13\left(\frac{-3}{k-1}\right),\quad
\text{and}\quad
\delta_2(k)=\begin{cases}
1&\text{if }k=2,\\
0&\text{if }k\ne2.
\end{cases}
\]
\begin{proposition}[\cite{Martin}, Proposition~12]
\label{prop:dimfull}
For $N\in\N$ and $k\in2\N$, we have
\[
\dim S_k(\Gamma_0(N))=
\frac{k-1}{12}\psi^\full(N)-\frac12\nu_\infty^\full(N)
+c_2(k)\nu_2^\full(N)+c_3(k)\nu_3^\full(N)+
\delta_2(k),
\]
where $\psi^\full$, $\nu_\infty^\full$, $\nu_2^\full$, $\nu_3^\full$ are multiplicative functions given on prime powers $p^e>1$ by
\[
\psi^\full(p^e)=p^e+p^{e-1},\quad
\nu_\infty^\full(p^e)=\begin{cases}
2p^{\frac{e-1}{2}}&\text{if }2\nmid e,\\
p^{\frac{e}{2}}+p^{\frac{e}{2}-1}&\text{if }2\mid e,
\end{cases}
\]
\[
\nu_2^\full(p^e)=\begin{cases}
1&\text{if }p^e=2,\\
2&\text{if }p\equiv1\bmod4,\\
0&\text{otherwise},
\end{cases}
\quad\text{and}\quad
\nu_3^\full(p^e)=\begin{cases}
1&\text{if }p^e=3,\\
2&\text{if }p\equiv1\bmod3,\\
0&\text{otherwise}.
\end{cases}
\]
\end{proposition}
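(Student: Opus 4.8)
The plan is to deduce the formula from the classical dimension formula for cusp forms on a Fuchsian group of the first kind, specialized to $\Gamma_0(N)$. Recall (see, e.g., Shimura's book or Diamond--Shurman, Ch.~3) that if $X_0(N)$ has genus $g$, exactly $\varepsilon_\infty$ cusps, and elliptic points of orders $e_1,\dots,e_r$, then for even $k\ge4$
\[
\dim S_k(\Gamma_0(N))=(k-1)(g-1)+\left(\tfrac k2-1\right)\varepsilon_\infty+\sum_{i=1}^r\left\lfloor\tfrac k2\left(1-\tfrac1{e_i}\right)\right\rfloor,
\]
obtained from Riemann--Roch applied to $\tfrac k2$ times a divisor of canonical type, while $\dim S_2(\Gamma_0(N))=g$ and $\dim S_0(\Gamma_0(N))=0$; the weight $k=2$ is the exceptional case, where that divisor is exactly canonical. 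For $\Gamma_0(N)$ every elliptic point has order $2$ or $3$; write $\nu_2,\nu_3$ for the numbers of each and $\mu=[\SL_2(\Z):\Gamma_0(N)]$ for the index. I would substitute the genus formula $g=1+\tfrac\mu{12}-\tfrac{\nu_2}4-\tfrac{\nu_3}3-\tfrac{\varepsilon_\infty}2$ into the displayed identity: the cusp contributions collapse to $-\tfrac12\varepsilon_\infty$, the index contributes $\tfrac{k-1}{12}\mu$, and the coefficients of $\nu_2$ and $\nu_3$ become $\left\lfloor k/4\right\rfloor-\tfrac{k-1}4$ and $\left\lfloor k/3\right\rfloor-\tfrac{k-1}3$ respectively.

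It then remains to check the elementary identities
\[
\left\lfloor\tfrac k4\right\rfloor-\tfrac{k-1}4=-\tfrac14\left(\tfrac{-4}{k-1}\right),\qquad
\left\lfloor\tfrac k3\right\rfloor-\tfrac{k-1}3=-\tfrac13\left(\tfrac{-3}{k-1}\right)
\]
for even $k$, i.e., that these coefficients equal $c_2(k)$ and $c_3(k)$. Both follow by reducing $k$ modulo $12$: the left side of the first is $\tfrac14$ when $4\mid k$ and $-\tfrac14$ when $k\equiv2\bmod4$, matching $-\tfrac14\left(\tfrac{-1}{k-1}\right)$ since $k-1$ is odd with $k-1\equiv3$ or $1\bmod4$ accordingly; the second is checked the same way, now also producing the value $0$ exactly when $3\mid k-1$. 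Finally, at $k=2$ the right-hand side of the proposition is $\tfrac\mu{12}-\tfrac{\varepsilon_\infty}2+c_2(2)\nu_2+c_3(2)\nu_3=\tfrac\mu{12}-\tfrac{\varepsilon_\infty}2-\tfrac{\nu_2}4-\tfrac{\nu_3}3=g-1$, so the term $\delta_2(k)=1$ is precisely what is needed to recover $\dim S_2(\Gamma_0(N))=g$.

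The remaining ingredient is that $\mu$, $\varepsilon_\infty$, $\nu_2$, $\nu_3$ are given by the stated multiplicative functions. These are classical: $\mu(N)=N\prod_{p\mid N}\bigl(1+\tfrac1p\bigr)=\prod_{p^e\|N}(p^e+p^{e-1})=\psi^\full(N)$; the number of cusps of $\Gamma_0(N)$ is $\sum_{d\mid N}\phi\bigl(\gcd(d,N/d)\bigr)$, which is multiplicative and on a prime power $p^e$ reduces, by splitting the sum at $\lfloor e/2\rfloor$ and using $\sum_{i\le f}\phi(p^i)=p^f$, to $2p^{(e-1)/2}$ for odd $e$ and $p^{e/2}+p^{e/2-1}$ for even $e$, i.e.\ $\nu_\infty^\full(p^e)$; and $\nu_2(N)=\prod_{p\mid N}\bigl(1+\bigl(\tfrac{-1}p\bigr)\bigr)$ if $4\nmid N$ (and $0$ otherwise), $\nu_3(N)=\prod_{p\mid N}\bigl(1+\bigl(\tfrac{-3}p\bigr)\bigr)$ if $9\nmid N$ (and $0$ otherwise), with the $p^e=2$ and $p^e=3$ cases as listed, which match $\nu_2^\full,\nu_3^\full$ upon recalling $\bigl(\tfrac{-1}p\bigr)=1\Leftrightarrow p\equiv1\bmod4$ and $\bigl(\tfrac{-3}p\bigr)=1\Leftrightarrow p\equiv1\bmod3$. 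I do not expect a genuine obstacle here; the only mildly delicate points are the correct handling of the $k=2$ exceptional case in the Riemann--Roch step and the residue bookkeeping in the floor-function identities, both entirely routine.
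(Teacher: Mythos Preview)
The paper does not supply its own proof of this proposition; it merely cites \cite{Martin}, Proposition~12. Your argument is the standard derivation of the dimension formula for $\Gamma_0(N)$ from Riemann--Roch and the classical counts of index, cusps, and elliptic points, and it is correct; this is essentially how Martin (and the standard references you cite) obtain the result as well.
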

\begin{proposition}[\cite{Martin}, Theorem~1]
\label{prop:dimnew}
For $N\in\N$ and $k\in2\N$, we have
\[
\dim S_k^\new(\Gamma_0(N))=
\frac{k-1}{12}\psi^\new(N)-\frac12\nu_\infty^\new(N)
+c_2(k)\nu_2^\new(N)+c_3(k)\nu_3^\new(N)+
\delta_2(k)\mu(N),
\]
where $\psi^\new$, $\nu_\infty^\new$, $\nu_2^\new$, $\nu_3^\new$ are multiplicative functions given on prime powers $p^e>1$ by
\[
\psi^\new(p^e)=\begin{cases}
p-1&\text{if }e=1,\\
p^2-p-1&\text{if }e=2,\\
p^{e-3}(p-1)^2(p+1)&\text{if }e>2,
\end{cases}
\quad
\nu_\infty^\new(p^e)=\begin{cases}
p-2&\text{if }e=2,\\
p^{\frac{e}{2}-2}(p-1)^2&\text{if }2\mid e>2,\\
0&\text{otherwise}
\end{cases}
\]
\[
\nu_2^\new(p^e)=\begin{cases}
-2&\text{if }p\equiv-1\bmod4\text{ and }e=1,\\
1&\text{if }p\equiv-1\bmod4\text{ and }e=2\text{ or }p^e=8,\\
-1&\text{if }p\equiv1\bmod4\text{ and }e=2\text{ or }p^e\in\{2,4\},\\
0&\text{otherwise},
\end{cases}
\]
and
\[
\nu_3^\new(p^e)=\begin{cases}
-2&\text{if }p\equiv-1\bmod3\text{ and }e=1,\\
1&\text{if }p\equiv-1\bmod3\text{ and }e=2\text{ or }p^e=27,\\
-1&\text{if }p\equiv1\bmod3\text{ and }e=2\text{ or }p^e\in\{3,9\},\\
0&\text{otherwise},
\end{cases}
\]
\end{proposition}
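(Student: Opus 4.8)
The plan is to deduce Proposition~\ref{prop:dimnew} from Proposition~\ref{prop:dimfull} by M\"obius inversion over the divisor lattice, using the decomposition of $S_k(\Gamma_0(N))$ into new and old forms. Recall from the theory of newforms (Atkin--Lehner--Li, valid for all $k\in2\N$) that for each $M\mid N$ and each $d\mid N/M$ the shift $f(z)\mapsto f(dz)$ embeds $S_k^\new(\Gamma_0(M))$ into $S_k(\Gamma_0(N))$, and that together these embeddings give
\[
S_k(\Gamma_0(N))=\bigoplus_{M\mid N}\ \bigoplus_{d\mid N/M}\bigl\{f(dz):f\in S_k^\new(\Gamma_0(M))\bigr\}.
\]
For fixed $M$ there are $\sigma_0(N/M)$ choices of $d$, where $\sigma_0$ denotes the number-of-divisors function, so counting dimensions gives $d_k^\full(N)=\sum_{M\mid N}\sigma_0(N/M)\,d_k^\new(M)$; in terms of Dirichlet convolution of functions on $\N$ this is $d_k^\full=\sigma_0*d_k^\new$.

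Since $\sigma_0=\mathbf 1*\mathbf 1$, where $\mathbf 1$ is the constant function, and the Dirichlet inverse of $\mathbf 1$ is $\mu$, we invert to get $d_k^\new=\beta*d_k^\full$ with $\beta:=\mu*\mu$, the multiplicative function satisfying $\beta(1)=1$, $\beta(p)=-2$, $\beta(p^2)=1$, and $\beta(p^e)=0$ for $e\ge3$. Inserting the formula of Proposition~\ref{prop:dimfull} and distributing $\beta$ over the finite linear combination yields
\[
d_k^\new=\tfrac{k-1}{12}\bigl(\beta*\psi^\full\bigr)-\tfrac12\bigl(\beta*\nu_\infty^\full\bigr)+c_2(k)\bigl(\beta*\nu_2^\full\bigr)+c_3(k)\bigl(\beta*\nu_3^\full\bigr)+\delta_2(k)\bigl(\beta*\mathbf 1\bigr).
\]
The last term simplifies via $\beta*\mathbf 1=\mu*(\mu*\mathbf 1)=\mu$, producing the summand $\delta_2(k)\mu(N)$ of the statement. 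Setting $\psi^\new:=\beta*\psi^\full$ and $\nu_j^\new:=\beta*\nu_j^\full$ for $j\in\{\infty,2,3\}$, all four are multiplicative since $\beta$ and the functions of Proposition~\ref{prop:dimfull} are.

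It remains to evaluate $\beta*h$ on prime powers. As $\beta$ is supported on $\{1,p,p^2\}$, one has the uniform identity $(\beta*h)(p^e)=h(p^e)-2h(p^{e-1})+h(p^{e-2})$ with the conventions $h(1)=1$ and $h(p^{j})=0$ for $j<0$. Substituting the prime-power values of $\psi^\full$ and $\nu_\infty^\full$ and simplifying — splitting on the parity of $e$ for the latter — reproduces the stated formulas for $\psi^\new$ and $\nu_\infty^\new$; for instance, for $e\ge3$ one gets $\psi^\new(p^e)=p^e-p^{e-1}-p^{e-2}+p^{e-3}=p^{e-3}(p-1)^2(p+1)$. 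For $\nu_2^\new$ and $\nu_3^\new$ the same computation applies, but now one must additionally track the exceptional values $\nu_2^\full(2)=1$ and $\nu_3^\full(3)=1$, which is exactly why the small cases $p^e\in\{2,4,8\}$, resp.\ $p^e\in\{3,9,27\}$, must be examined by hand.

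The main obstacle is not conceptual: the structural input — the oldform decomposition with multiplicity precisely $\sigma_0(N/M)$, resting on multiplicity one for newforms — is entirely standard, and the convolution bookkeeping is routine. The only genuinely fiddly point is correctly propagating the exceptional values at $p\in\{2,3\}$ through $\beta*\nu_2^\full$ and $\beta*\nu_3^\full$, which requires the finitely many by-hand checks noted above; an alternative that avoids M\"obius inversion is to apply the Eichler--Selberg trace formula directly to $S_k^\new(\Gamma_0(N))$, but that route is strictly more laborious given that Proposition~\ref{prop:dimfull} is already available.
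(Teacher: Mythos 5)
Your derivation is correct: the paper does not prove this proposition but simply cites Martin's Theorem~1, and Martin's own argument is exactly the M\"obius inversion $d_k^\new=(\mu*\mu)*d_k^\full$ from the Atkin--Lehner--Li oldform decomposition that you carry out. Your prime-power evaluations of $\beta*\psi^\full$, $\beta*\nu_\infty^\full$, $\beta*\nu_2^\full$, $\beta*\nu_3^\full$ (including the exceptional cases at $p=2,3$) all check against the stated formulas, so this is a faithful reconstruction of the cited proof.
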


The twist-minimal space $S_k^\min(\Gamma_0(N))$ is the subspace of $S_k^\new(\Gamma_0(N))$ spanned by newforms that cannot be obtained by twisting a lower-level newform by a Dirichlet character.
We compute the dimension of $S_k^\min(\Gamma_0(N))$ using the trace formula derived by Child \cite{Child}.
\begin{theorem}\label{thm:dimmin}
For $N\in\N$ and $k\in2\N$, we have
\[
\dim S_k^\min(\Gamma_0(N))=
\frac{k-1}{12}\psi^\min(N)-\frac12\nu_\infty^\min(N)
+c_2(k)\nu_2^\min(N)+c_3(k)\nu_3^\min(N)+
\delta_2(k)\mu(N).
\]
where $\psi^\min$, $\nu_\infty^{\min}$, $\nu_2^\min$, and $\nu_3^\min$ are multiplicative functions given on prime powers $p^e>1$ by
\[
\psi^\min(p^e)=
\frac{p-1}{(2,p-1,e)}
\begin{cases}
1 & \text{if }e=1,\\
p-1 & \text{if }e=2, \\
p^{e-3}(p^2-1) & \text{if }e>2,
\end{cases}
\qquad
\nu_\infty^\min(p^e)=\begin{cases}
p^{\frac{e}{2}-2}&\text{if }p=2\text{ and }2\mid e>2,\\
0&\text{otherwise},
\end{cases}
\]
\[
\nu_2^\min(p^e) = \begin{cases}
-2 \mu(p^{e-1}) & \text{if } p\equiv-1\bmod{4},\\
-1&\text{if }p^e\in\{2,4\},\\
1&\text{if }p^e=8,\\
0 & \text{otherwise},
\end{cases}
\]
and
\[
\nu_3^\min(p^e) = \begin{cases}
-2\mu(p^{e-1})&\text{if }p\equiv-1\bmod{3} 
\text{ and }p^e\ne4, \\
-1&\text{if }p^e\in\{3,9\},\\
1&\text{if }p^e\in\{4,27\},\\
0 & \text{otherwise.}
\end{cases}
\]
\end{theorem}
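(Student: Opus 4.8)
The plan is to specialize Child's twist-minimal trace formula \cite{Child} to the Hecke operator $T_1$, so that its left-hand side becomes $\dim S_k^\min(\Gamma_0(N))$, and then to read off the right-hand side. As with the classical Eichler--Selberg trace formula, Child's formula decomposes the trace into an identity contribution, elliptic contributions from the conjugacy classes of order $2$ and $3$, a parabolic (cuspidal) contribution, and --- only when $k=2$ --- a residual/Eisenstein contribution; for $n=1$ there is no hyperbolic term. The first step is therefore to match these pieces with $\frac{k-1}{12}\psi^\min(N)$, $c_2(k)\,\nu_2^\min(N)$, $c_3(k)\,\nu_3^\min(N)$, $-\frac12\nu_\infty^\min(N)$ and $\delta_2(k)\mu(N)$, in parallel with Propositions~\ref{prop:dimfull} and~\ref{prop:dimnew}.

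The second step is to note that, because twist-minimality amounts to requiring that the local component of the automorphic representation have minimal conductor in its local twist orbit at each prime separately, each term of Child's formula factors as a product of local contributions indexed by the primes $p\mid N$ (built from local conductor exponents, local central and root numbers, and local embedding or fixed-point counts). This shows at once that $\psi^\min$, $\nu_\infty^\min$, $\nu_2^\min$ and $\nu_3^\min$ are multiplicative, and reduces the theorem to evaluating each local factor at a prime power $p^e$.

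The third and main step is the evaluation of these local factors. For the identity term the local factor counts twist-minimal local representations of conductor exponent $e$, the denominator $(2,p-1,e)$ recording the order of the group of local twisting characters fixing such a representation; the three cases $e=1$, $e=2$ and $e>2$ reflect the local structure at these conductor exponents. For the elliptic terms one must count the embeddings --- equivalently, the fixed points --- associated with the rings of integers of $\Q(i)$ and $\Q(\sqrt{-3})$ that survive the twist-minimal projection; the congruence conditions $p\equiv\pm1\bmod4$ and $p\equiv\pm1\bmod3$, the M\"obius factors $\mu(p^{e-1})$ (arising from an inclusion--exclusion over how far a representation can be untwisted), and the exceptional values at $p^e\in\{2,4,8\}$ and $p^e\in\{3,4,9,27\}$ are all to be extracted from a case analysis in $p$ and $e$. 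The parabolic term is nonzero only at $p=2$ with $2\mid e>2$, and the weight-$2$ term contributes $\mu(N)$ exactly as for $S_k^\new(\Gamma_0(N))$.

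I expect the main obstacle to be the elliptic-term bookkeeping at the small primes $p=2,3$ and small exponents $e$, where the generic formulas degenerate and produce the exceptional entries above: keeping track of local central characters, root numbers and embedding multiplicities through the twist-minimal projection there is delicate. A secondary difficulty is reconciling the normalization and hypotheses of Child's formula (which is stated for general level and nebentypus, and possibly also for Maass forms) with the holomorphic trivial-nebentypus setting needed here. The computations can be cross-checked against the identity $S_k^\min(\Gamma_0(N))=S_k^\new(\Gamma_0(N))$ for squarefree $N$ --- immediate from comparing the prime-power values above with Proposition~\ref{prop:dimnew} --- and against dimension tables for small~$N$.
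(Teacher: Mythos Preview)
Your plan is correct and matches the paper's approach: specialize Child's trace formula at $n=1$, identify the identity, elliptic, parabolic, and residual pieces with $\frac{k-1}{12}\psi^\min(N)$, $c_2(k)\nu_2^\min(N)+c_3(k)\nu_3^\min(N)$, $-\frac12\nu_\infty^\min(N)$, and $\delta_2(k)\mu(N)$, and then read off the local factors at each $p^e$. The only substantive difference is one of execution rather than strategy: where you anticipate having to \emph{derive} the local elliptic factors by counting twist-minimal representations and embeddings, the paper simply quotes Child's already-computed local quantities $S_p^\min(p^e,1,t,1)$ from \cite[(2.12)--(2.17)]{Child} and checks case by case that they agree with the stated values of $\nu_2^\min(p^e)$ and $\nu_3^\min(p^e)$; likewise $\psi^\min$ and the parabolic term are taken directly from Child rather than re-established, so the ``main obstacle'' you flag largely evaporates.
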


\begin{proof} 
Taking $n=1$ in \cite[Theorem 2.1]{Child}, we obtain
\[
\dim S_k^{\min}(N) = C_1-C_2-C_3+C_4,
\]
where $C_1=\frac{k-1}{12}\psi^\min(N)$, $C_4=\delta_2(k)\mu(N)$, and the other terms are as follows.

For $C_2$, we have
\[
C_2 = \sum_{\substack{t\in\Z,\,d=t^2-4<0\\\rho^2-t\rho+1=0,\,\Im\rho>0}} \frac{\rho^{k-1}-\bar{\rho}^{k-1}}{\rho-\bar{\rho}} \frac{h(d)}{w(d)} \prod_{p\mid N} S_p^\min\bigl(p^{\ord_p(N)}, 1, t, 1\bigr),
\]
where $h(d)$ is the class number of $\Q(\sqrt{d})$, $w(d)$ is its number of roots of unity, and $S_p^\min$ is a multiplicative function defined in \cite[(2.12)--(2.17)]{Child}. Note that the sum has only the three terms $t=0,\pm1$. 
We have
\[
-\frac{\rho^{k-1}-\bar{\rho}^{k-1}}{\rho-\bar{\rho}}
= 2(-1)^{\frac{k}{2}} \frac{\cos((k-1)\phi_t)}{\sqrt{4-t^2}} \quad\text{where }\phi_t=\arcsin\!\left(\frac{t}{2}\right) \in \left[-\frac{\pi}{2}, \frac{\pi}{2}\right].
\] 
For $t=0$, this yields
\[
-\frac{\rho^{k-1}-\bar{\rho}^{k-1}}{\rho-\bar{\rho}}\frac{h(d)}{w(d)} = \frac{(-1)^{\frac{k}{2}}}{4}
= c_2(k),
\]
and for $t=\pm1$,
\begin{align*}
-2\frac{\rho^{k-1}-\bar{\rho}^{k-1}}{\rho-\bar{\rho}}\frac{h(d)}{w(d)}
&= \frac{(-1)^{\frac{k}{2}}}{3} 
\begin{cases}
(-1)^{\frac{k}{6}}  & \text{ if } k\equiv 0\bmod{6}, \\
(-1)^{\frac{k-2}{6}}  & \text{ if } k\equiv 2\bmod{6}, \\
0 & \text{ if } k\equiv 4\bmod{6}
\end{cases}\\
&=c_3(k),
\end{align*}
so that
\[
-C_2 = c_2(k)\prod_{p\mid N} S_p^\min\bigl(p^{\ord_p(N)}, 1, 0, 1\bigr) 
+ c_3(k)\prod_{p\mid N} S_p^\min\bigl(p^{\ord_p(N)}, 1, \pm1, 1\bigr).
\]
From \cite[(2.12)]{Child} we see that
\[
S_p^{\min}(p^e, 1, t, 1)
= \left(\left(\frac{d}{p}\right)-1\right) \mu(p^{e-1})
\quad\text{when }2<p\nmid d,
\]
and
\[
S_p^{\min}(p^e, 1, t, 1)
= \begin{cases}
-1 & \text{if }e\le2,\\
1 & \text{if }e=3,\\
0 & \text{otherwise}
\end{cases}
\quad\text{when }p=3,\,t=\pm1.
\]
For $p=2$, we see from \cite[(2.14), (2.15)]{Child} that
\[
S_p^{\min}(p^e, 1, 0, 1) = \begin{cases}
-1 & \text{if }e\le2, \\
1 & \text{if }e=3, \\
0 & \text{otherwise} 
\end{cases}
\qquad\text{and}\qquad
S_p^{\min}(p^e, 1, \pm1, 1) = \begin{cases}
-2 & \text{if } e=1, \\
1 & \text{if } e=2, \\
0 & \text{otherwise}.
\end{cases}
\]
In all cases these match the local factors in $\nu_2^\min$ and $\nu_3^\min$.

Finally, we have $C_3=0$ unless $N=1$ or $N=2^e$ with $2\mid e$ and $e>2$. In the latter case, we have
$2C_3 = 2^{\frac{e}{2}-2}$, matching the local factor of $\nu_\infty^\min$.
\end{proof}

\section{Proof of Theorem~\ref{thm:main}}
We begin with a few lemmas.
\begin{lemma}\label{lem:nubounds}
For $N\in\N$ and $\n\in\{\full,\new,\min\}$, we have 
\[
0\le\nu_\infty^\n(N)\le\frac{\psi^\n(N)}{\sqrt{N}}
\quad\text{and}\quad
\bigl|c_2(k)\nu_2^\n(N)+c_3(k)\nu_3^\n(N)\bigr|
\le \frac{7}{12}2^{\omega(N)}
\]
\end{lemma}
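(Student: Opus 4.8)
The plan is to verify both inequalities multiplicatively, since each of $\nu_\infty^\star$, $\nu_2^\star$, $\nu_3^\star$, $\psi^\star$ is a multiplicative function and both sides of each inequality factor over the primes dividing $N$ (for the second inequality, only after bounding $|c_2(k)|\le\frac14$ and $|c_3(k)|\le\frac13$, which follow immediately from the definitions, and then splitting the sum $c_2\nu_2^\star+c_3\nu_3^\star$ into its two pieces).

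For the first inequality I would argue prime power by prime power. Nonnegativity of $\nu_\infty^\star(p^e)$ is visible from the formulae in Propositions~\ref{prop:dimfull}, \ref{prop:dimnew} and Theorem~\ref{thm:dimmin}: in every case the local value is a nonnegative quantity (a power of $p$, or $p-2\ge0$ for $p$ an odd prime with $e=2$, noting $\nu_\infty^\new(2^2)=0$ falls under "otherwise"). For the upper bound I would check, for each $\star$ and each prime power $p^e>1$, that $\nu_\infty^\star(p^e)\le p^{-e/2}\psi^\star(p^e)$; multiplying these over $p^e\|N$ gives $\nu_\infty^\star(N)\le N^{-1/2}\psi^\star(N)$. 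In the full case one compares $2p^{(e-1)/2}$ or $p^{e/2}+p^{e/2-1}$ against $p^{-e/2}(p^e+p^{e-1})=p^{e/2}+p^{e/2-1}$, which is an equality when $e$ is even and easy when $e$ is odd. In the new and min cases the $\nu_\infty$ factor is supported on even $e$ and one checks, e.g., $p^{e/2-2}(p-1)^2\le p^{-e/2}p^{e-3}(p-1)^2(p+1)=p^{e/2-3}(p-1)^2(p+1)$, i.e. $p\le p+1$; the min case is even easier since $\nu_\infty^\min$ is supported only on powers of $2$. This is all routine once the cases are laid out.

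For the second inequality, after reducing to $\frac14\bigl|\nu_2^\star(N)\bigr|+\frac13\bigl|\nu_3^\star(N)\bigr|\le\frac7{12}2^{\omega(N)}$, I would bound $\bigl|\nu_j^\star(N)\bigr|=\prod_{p\mid N}\bigl|\nu_j^\star(p^{\ord_p N})\bigr|$ by checking that each local factor has absolute value at most $2$; inspection of the case tables shows the only nonzero values are $\pm1$ and $\pm2$, so $\bigl|\nu_j^\star(N)\bigr|\le 2^{\omega(N)}$. Then $\frac14\bigl|\nu_2^\star(N)\bigr|+\frac13\bigl|\nu_3^\star(N)\bigr|\le\bigl(\frac14+\frac13\bigr)2^{\omega(N)}=\frac7{12}2^{\omega(N)}$, as claimed.

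I do not expect any genuine obstacle here; the lemma is a bookkeeping exercise over the explicit local formulae, and the mild "hard part" is simply organizing the finitely many shapes of prime-power factors (odd $e$ versus even $e$, small prime powers $2,4,8,3,9,27$ versus generic $p^e$, and the residue conditions $p\equiv\pm1\bmod 4$, $p\equiv\pm1\bmod 3$) so that no case is overlooked; the constant $\frac7{12}=\frac14+\frac13$ is exactly tuned to the trivial bounds $|c_2(k)|\le\frac14$, $|c_3(k)|\le\frac13$ and the local bound $2$.
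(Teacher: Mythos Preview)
Your proposal is correct and follows essentially the same approach as the paper: the paper packages the first inequality by defining the ratio $f^\n(N)=\nu_\infty^\n(N)\sqrt{N}/\psi^\n(N)$ and checking its local factors lie in $[0,1]$, which is exactly your prime-power comparison, and for the second inequality the paper uses precisely your bounds $|c_2(k)|\le\tfrac14$, $|c_3(k)|\le\tfrac13$, $|\nu_j^\n(N)|\le 2^{\omega(N)}$. One tiny slip: $\nu_\infty^\new(4)$ does not fall under ``otherwise'' but under the $e=2$ case with value $p-2=0$; this is harmless since the value you need is still $0$.
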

\begin{proof}
Define $f^\n(N)=\nu_\infty^\n(N)\sqrt{N}/\psi^\n(N)$. Then for prime powers $p^e>1$ we compute that
\[
f^\full(p^e)=\begin{cases}
\frac{2}{p^{\frac12}+p^{-\frac12}}&\text{if }2\nmid e,\\
1&\text{if }2\mid e,
\end{cases}
\qquad
f^\new(p^e)=\begin{cases}
\frac{p^2-2p}{p^2-p-1}&\text{if }e=2,\\
\frac{p}{p+1}&\text{if }2\mid e>2,\\
0&\text{otherwise},
\end{cases}
\]
and
\[
f^\min(p^e)=\begin{cases}
\frac23&\text{if }p=2\text{ and }2\mid e>2,\\
0&\text{otherwise}.
\end{cases}
\]
Thus the local factors are always non-negative and bounded by $1$, which proves the first inequality.

For the second, we have $|\nu_2^\n(N)|, |\nu_3^\n(N)| \leq 2^{\omega(N)}$, $|c_2(k)|\le\frac14$, $|c_3(k)|\le\frac13$.
\end{proof}

We recall that a number $N\in\N$ is called \emph{squarefull} if $p^2\mid N$ for every prime $p\mid N$. Let $H(N)$ denote the squarefull part of $N$, i.e.\ the largest squarefull number dividing $N$. Note that $N/H(N)$ is squarefree and $(H(N),N/H(N))=1$.
\begin{lemma}\label{lem:squarefull}
We have
\[
N^{1+\varepsilon}\gg_\varepsilon
\psi^\full(N)\ge\psi^\new(N)\ge\psi^\min(N)
\gg_\varepsilon N^{1-\varepsilon}
\quad\text{for }N\in\N,\,\varepsilon>0
\]
and
\[
\sum_{\substack{N>x\\N\text{ squarefull}}}\frac1{\psi^\n(N)}
\ll\frac{\log{x}}{\sqrt{x}}
\quad\text{for }x\ge2\text{ and }
\n\in\{\full,\new,\min\}.
\]
\end{lemma}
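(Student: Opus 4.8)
I would split the proof along the two displays, reducing the first to prime-power computations and the second to the analytic behaviour of one Dirichlet series. \emph{First display.} Since $\psi^\full,\psi^\new,\psi^\min$ are multiplicative and positive, the chain $\psi^\full(N)\ge\psi^\new(N)\ge\psi^\min(N)$ reduces to the same inequalities on prime powers, which are immediate from Propositions~\ref{prop:dimfull}, \ref{prop:dimnew} and Theorem~\ref{thm:dimmin}; for instance, for $e\ge3$ one has the decreasing triple $\psi^\full(p^e):\psi^\new(p^e):\psi^\min(p^e)=p^2:(p-1)^2:(p-1)^2/(2,p-1,e)$, and the cases $e=1,2$ are equally routine. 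For the outer bounds, $\psi^\full(N)=N\prod_{p\mid N}(1+p^{-1})\le 2^{\omega(N)}N\ll_\varepsilon N^{1+\varepsilon}$, while $\psi^\min(p^e)\ge p^e/8$ on every prime power (using $(1-p^{-1})^2\ge\tfrac14$), so $\psi^\min(N)\ge 8^{-\omega(N)}N\gg_\varepsilon N^{1-\varepsilon}$.

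\emph{Second display.} By the first part it suffices to bound $\sum_{N>x,\ N\text{ squarefull}}\psi^\min(N)^{-1}$. For squarefull $p^e$ (so $e\ge2$) the formula of Theorem~\ref{thm:dimmin} rearranges as $\psi^\min(p^e)=\tfrac{\phi(p^e)^2}{p^e}\cdot\tfrac{c(p^e)}{(2,p-1,e)}$ with $c(p^e)=1$ for $e=2$ and $c(p^e)=1+p^{-1}$ for $e\ge3$; since $(2,p-1,e)\le2$ and $c(p^e)\ge1$, multiplicativity gives $\psi^\min(N)\ge\phi(N)^2/(2^{\omega(N)}N)$ for squarefull $N$, whence
\[
\sum_{\substack{N>x\\N\text{ squarefull}}}\frac1{\psi^\min(N)}\ \le\ \sum_{\substack{N>x\\N\text{ squarefull}}}G(N),\qquad G(p^e):=\frac2{p^{e-2}(p-1)^2}\quad(e\ge2),
\]
with $G$ multiplicative. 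The associated Dirichlet series $F(s)=\sum_{N\text{ squarefull}}G(N)N^{-s}=\prod_p\bigl(1+\tfrac2{(p-1)^2}p^{-2s}+\sum_{e\ge3}\tfrac2{p^{e-2}(p-1)^2}p^{-es}\bigr)$ factors as $F(s)=\zeta(2s+2)^2H(s)$, where $H$ is an Euler product converging absolutely and bounded on vertical lines for $\Re s>-\tfrac23$ with $H(-\tfrac12)\ne0$; thus $F$ is holomorphic for $\Re s>-\tfrac12$ and continues to $\Re s>-\tfrac23$ with a double pole at $s=-\tfrac12$. Starting from the Perron formula $\sum_{N\le x}G(N)=\frac1{2\pi i}\int_{(c)}F(s)\tfrac{x^s}{s}\,ds$ ($c>0$) and moving the contour leftward past $s=0$ and $s=-\tfrac12$ along a curve skirting the classical zero-free region of $\zeta(2s+2)$, the residue at $s=0$ cancels $F(0)=\sum_N G(N)$ and the residue at the double pole produces the main term, giving $\sum_{N>x,\ \text{squarefull}}G(N)=x^{-1/2}(A\log x+B)+O\bigl(x^{-1/2}e^{-c\sqrt{\log x}}\bigr)$ for constants $A,B$. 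In particular it is $\ll\frac{\log x}{\sqrt x}$, and absorbing the range $2\le x\le x_0$ into the implied constant finishes the proof.

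\emph{Main obstacle.} The substantive step is the tail estimate. The crude bound $\psi^\min(N)\gg_\varepsilon N^{1-\varepsilon}$ yields only $\sum_{N>x,\ \text{squarefull}}\psi^\min(N)^{-1}\ll_\varepsilon x^{-1/2+\varepsilon}$, weaker than claimed, and even the bound through $G$ cannot be handled by Rankin's trick alone (which loses the logarithm); one must genuinely track the double pole of $F$, and hence control the growth of $\zeta$ near its pole — via the zero-free region, or by appeal to a Tauberian theorem for partial sums of nonnegative multiplicative functions. It is moreover essential that the constant $2$ in $\psi^\min(p^2)\ge\tfrac12(p-1)^2$ be sharp: replacing $(p-1)^2$ by $p^2/c$ with $c>2$ would raise the order of the pole and introduce spurious powers of $\log x$.
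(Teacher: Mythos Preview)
Your proof is correct. For the first display you check prime powers directly, whereas the paper derives $\psi^\full\ge\psi^\new\ge\psi^\min$ more slickly from the trivial inequalities $d_k^\full\ge d_k^\new\ge d_k^\min$ by multiplying by $12/(k-1)$ and letting $k\to\infty$; either way works. For the tail estimate you and the paper start from the same pointwise bound $\psi^\min(N)\ge\varphi(N)^2/(2^{\omega(N)}N)=:f(N)$, but then diverge. The paper writes each squarefull $N$ as $a^2b^3$ with $b$ squarefree, uses the submultiplicative inequality $\tfrac{2^{\omega(ab)}}{\varphi(ab)^2}\le\tfrac{2^{\omega(a)}}{\varphi(a)^2}\tfrac{2^{\omega(b)}}{\varphi(b)^2}$ to separate variables, splits on $b\le x^{1/3}$ versus $b>x^{1/3}$, and reduces everything to the tail bound $\sum_{n>y}\tfrac{2^{\omega(n)}}{\varphi(n)^2}\ll\tfrac{\log y}{y}$ over \emph{all} integers $n$, obtained by partial summation from $\sum_{n\le y}\tfrac{2^{\omega(n)}n^2}{\varphi(n)^2}\ll y\log y$ (Tauberian, since this Dirichlet series is $\zeta(s)^2$ times something holomorphic for $\Re s>\tfrac12$). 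Your route---forming $F(s)=\sum_{N\text{ squarefull}}G(N)N^{-s}=\zeta(2s+2)^2H(s)$ and shifting the Perron contour through the zero-free region past the double pole at $s=-\tfrac12$---is more direct and even yields the sharper asymptotic $x^{-1/2}(A\log x+B)+O\bigl(x^{-1/2}e^{-c\sqrt{\log x}}\bigr)$, but buys this at the cost of more analytic machinery (truncated Perron error terms, growth bounds on $\zeta$ near $\Re=1$). The paper's $a^2b^3$ decomposition trades that machinery for one extra elementary splitting, needing only a black-box Tauberian input and partial summation.
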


\begin{proof}
We trivially have $d_k^\full(N)\ge d_k^\new(N)\ge d_k^\min(N)$, and multiplying by $\frac{12}{k-1}$ and taking $k\to\infty$ we deduce that $\psi^\full(N)\ge\psi^\new(N)\ge\psi^\min(N)$.
Note that
\[
\psi^\full(N)=N\prod_{p\mid N}\left(1+\frac1p\right)
\ll N\log\log(3N),
\]
which establishes the upper bound.

Next let $f(N)=\frac{\varphi(N)^2}{2^{\omega(N)}N}$. Then for a prime power $p^e>1$, we have
\[
\frac{\psi^\min(p^e)}{f(p^e)}
=\begin{cases}
\frac{2p}{p-1}&\text{if }e=1,\\
\frac{2}{(2,p-1)}&\text{if }e=2,\\
\frac{2(p+1)}{(2,p-1,e)p}&\text{if }e>2.
\end{cases}
\]
This is at least $1$ in all cases, so we have
\[
\psi^\min(N)\ge f(N)
\ge N^{1-\frac{\log2+o(1)}{\log\log(3N)}}
\quad\text{as }N\to\infty,
\]
which establishes the lower bound.
Using this and the inequality $\frac{2^{\omega(ab)}}{\varphi(ab)^2}\le\frac{2^{\omega(a)}}{\varphi(a)^2}\frac{2^{\omega(b)}}{\varphi(b)^2}$, we have 
\begin{align*} 
\sum_{\substack{N\text{ squarefull}\\N>x}} \frac{1}{\psi^\n(N)}
& \leq \sum_{\substack{N\text{ squarefull}\\N>x}} \frac1{f(N)}
= \sum_{\substack{a, b\in \N\\ a^2b^3>x}}\frac{\mu^2(b)}{f(a^2b^3)}
= \sum_{\substack{a, b\in \N\\ a^2b^3>x}} \frac{ \mu^2(b)2^{\omega(ab)}}{b\varphi(ab)^2} 
\\ & \leq \sum_{\substack{a, b\in \N\\ a^2b^3>x}} \frac{2^{\omega(a)}}{\varphi(a)^2} \frac{2^{\omega(b)}}{b\varphi(b)^2}
= \sum_{b\leq x^{\frac{1}{3}}} \frac{2^{\omega(b)}}{b\varphi(b)^2} \sum_{a>\sqrt{\frac{x}{b^3}}} \frac{2^{\omega(a)}}{\varphi(a)^2}
+ \sum_{b>x^{\frac{1}{3}}}\frac{2^{\omega(b)}}{b\varphi(b)^2} 
\sum_{a\in \N} \frac{2^{\omega(a)}}{\varphi(a)^2}. 
\end{align*}
Since $\sum_{a\in \N} \frac{2^{\omega(a)}}{\varphi(a)^2}$ converges and 
$\sum_{b>x^{\frac{1}{3}}} \frac{2^{\omega(b)}}{b\varphi(b)^2} \ll_\varepsilon x^{-\frac23+\varepsilon}$,
we have
\[\sum_{b>x^{\frac{1}{3}}}\frac{2^{\omega(b)}}{b\varphi(b)^2} 
\sum_{a\in \N} \frac{2^{\omega(a)}}{\varphi(a)^2} \ll_\varepsilon x^{-\frac23+\varepsilon}.\]

To estimate the inner sum when $b\le x^{\frac13}$, let
\begin{align*} 
F(s) & = \sum_{n=1}^\infty \frac{2^{\omega(n)}n^2}{\varphi(n)^2}\frac{1}{n^s}
= \prod_p \bigg(1+\frac{2}{(1-p^{-1})^2}\sum_{j=1}^\infty p^{-js}\bigg)
\\ & = \zeta(s)^2 \prod_p \left(1+ \frac{2p(2p-1)}{(p-1)^2}p^{-s-1} + \frac{p^2+2p-1}{(p-1)^2}p^{-2s}\right).
\end{align*}
The product over $p$ converges absolutely for $\Re(s)>\frac12$, so $F(s)$ continues analytically to $\Re(s)>\frac12$ apart from a double pole at $s=1$. 
Applying \cite[Theorem 3.1]{Kato}, we have 
\[S(x)\coloneq \sum_{n\leq x} \frac{2^{\omega(n)}n^2}{\varphi(n)^2} \ll x(1+\log x)
\quad\text{for }x\ge1,\]
which yields
\[\sum_{n>x} \frac{2^{\omega(a)}}{\varphi(a)^2}
=\int_x^\infty t^{-2}\,dS(t)
\le 2\int_{x}^\infty S(t) t^{-3} \, dt
\ll \int_x^\infty \frac{1+\log t}{t^2}\, dt = \frac{2+\log{x}}{x}.\]
Thus for $x\ge2$ we have 
\begin{align*}
\sum_{\substack{N>x\\N\text{ squarefull}}} \frac{1}{\psi^\n(N)} \ll_\varepsilon
x^{-\frac23+\varepsilon}+\frac{\log{x}}{\sqrt{x}} \sum_{b\leq x^{\frac{1}{3}}} \frac{2^{\omega(b)}}{\sqrt{b}\varphi(b)^2}
\ll \frac{\log x}{\sqrt{x}}.
\end{align*}
\end{proof}

\begin{lemma}\label{lem:eta}
Let $\eta = \zeta(\frac32)/\zeta(3)=2.17325\ldots$. Then
\[
\#\bigl\{N\text{ squarefull}:N\le x\bigr\}\le\eta\sqrt{x}
\quad\text{and}\quad
\sum_{\substack{N\text{ squarefull}\\N>x}}\frac1N
\le\frac{2\eta}{\sqrt{x}}.
\]
\end{lemma}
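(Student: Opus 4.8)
The plan is to use the standard parametrization of squarefull numbers. Recall that every squarefull $N$ has a unique representation $N=a^2b^3$ with $a,b\in\N$ and $b$ squarefree, and conversely every such $a^2b^3$ is squarefull. (Existence and uniqueness follow by inspecting exponents: for a prime $p\mid N$ we have $\ord_p(N)\ge2$, and writing $\ord_p(N)=2\alpha_p+3\beta_p$ with $\beta_p\in\{0,1\}$ forces $\beta_p\equiv\ord_p(N)\bmod2$ and then determines $\alpha_p\ge0$.) Consequently
\[
\#\{N\text{ squarefull}:N\le x\}
=\sum_{\substack{b\text{ squarefree}\\ b^3\le x}}\left\lfloor\sqrt{x/b^3}\,\right\rfloor
\le\sqrt{x}\sum_{b\text{ squarefree}}b^{-3/2},
\]
where the inequality uses $\lfloor\sqrt{x/b^3}\rfloor\le\sqrt{x/b^3}$ for every $b$ (the terms with $b^3>x$ contributing $0$ on the left). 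Since $\sum_{b\text{ squarefree}}b^{-s}=\zeta(s)/\zeta(2s)$ for $\Re(s)>1$, taking $s=\frac32$ gives $\sum_{b\text{ squarefree}}b^{-3/2}=\zeta(\frac32)/\zeta(3)=\eta$, which is the first bound.

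For the second bound, set $Q(t)=\#\{N\text{ squarefull}:N\le t\}$, so that the first part gives $Q(t)\le\eta\sqrt{t}$. Abel summation against $f(t)=1/t$ yields, for any $y>x$,
\[
\sum_{\substack{x<N\le y\\ N\text{ squarefull}}}\frac1N
=\frac{Q(y)}{y}-\frac{Q(x)}{x}+\int_x^y\frac{Q(t)}{t^2}\,dt .
\]
Letting $y\to\infty$, the boundary term $Q(y)/y\le\eta/\sqrt{y}$ tends to $0$; dropping the non-positive term $-Q(x)/x$ and inserting $Q(t)\le\eta\sqrt{t}$ gives
\[
\sum_{\substack{N>x\\ N\text{ squarefull}}}\frac1N
\le\int_x^\infty\frac{\eta\sqrt{t}}{t^2}\,dt=\frac{2\eta}{\sqrt{x}} .
\]

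There is essentially no obstacle: both estimates reduce to the unique factorization $N=a^2b^3$ with $b$ squarefree, the elementary Euler-product identity $\sum_{b\text{ squarefree}}b^{-s}=\zeta(s)/\zeta(2s)$ evaluated at $s=\frac32$, and a one-line partial summation. The only point needing a moment of care is the justification of the parametrization, handled by the exponent analysis indicated above; everything else is routine.
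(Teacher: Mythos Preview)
Your proof is correct and follows essentially the same approach as the paper: the paper cites Golomb for the bound $Q(x)\le\eta\sqrt{x}$ (which is exactly the parametrization argument you spell out) and then says the tail estimate ``follows from the first by partial summation,'' which is precisely your Abel summation computation.
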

\begin{proof}
The first estimate is \cite[(8)]{Golomb}, and the second follows from the first by partial summation.
\end{proof}

\begin{lemma}\label{lem:values}
For any $k\in2\N$, $r,s\in\N$ and $\n\in\{\new,\min\}$, we have
\[
\#\left\{d_k^\n(N)-\frac{k-1}{12}\psi^\n(N):
N\in\N,\,\omega(N)<r,\,\sqrt{N}\notin\N\right\}
\le3(2r+1)^2
\]
and
\[
\#\left\{d_k^\full(N)-\frac{k-1}{12}\psi^\full(N):
N\in\N,\,\omega(N)<r,\,H(N)\le s\right\}
\le\eta\sqrt{s}r(r+1)^2.
\]
\end{lemma}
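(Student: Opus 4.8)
The plan is to subtract the leading term from each dimension formula and count the values of what remains. By Propositions~\ref{prop:dimfull} and \ref{prop:dimnew} and Theorem~\ref{thm:dimmin}, for $\n\in\{\new,\min\}$ we have
\[
d_k^\n(N)-\frac{k-1}{12}\psi^\n(N)=-\tfrac12\nu_\infty^\n(N)+c_2(k)\nu_2^\n(N)+c_3(k)\nu_3^\n(N)+\delta_2(k)\mu(N),
\]
and the same with $\delta_2(k)$ in place of $\delta_2(k)\mu(N)$ when $\n=\full$. In each case the remainder is an affine function of a vector whose coordinates are $\nu_\infty^\n(N)$, $\nu_2^\n(N)$, $\nu_3^\n(N)$, and (for $\new,\min$) $\mu(N)$, so the number of distinct values of the remainder over a family of $N$ is at most the product, over the coordinates, of the number of distinct values each takes. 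It thus suffices to bound the coordinates one at a time.

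For the first inequality I would first note that every local factor of $\nu_\infty^\new$ and of $\nu_\infty^\min$ vanishes at a prime power $p^e$ with $e$ odd (check the tables in Proposition~\ref{prop:dimnew} and Theorem~\ref{thm:dimmin}), so if $N$ is not a perfect square then $\nu_\infty^\n(N)=0$ and that coordinate drops out. For the other two $\nu$-coordinates, every local factor of $\nu_2^\n$ and $\nu_3^\n$ lies in $\{0,\pm1,\pm2\}$, and a local factor of absolute value $2$ can only come from a prime divisor of $N$; since distinct such factors come from distinct primes, at most $\omega(N)\le r-1$ of the local factors have absolute value $2$, whence
\[
\nu_2^\n(N),\ \nu_3^\n(N)\ \in\ \{0\}\cup\{\pm2^a:0\le a\le r-1\},
\]
a set of size $2r+1$. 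Finally $\delta_2(k)\mu(N)\in\{-1,0,1\}$ takes at most $3$ values. Multiplying the three bounds gives $3(2r+1)^2$.

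For the second inequality the additive term $\delta_2(k)$ is constant in $N$, so only $\nu_\infty^\full$, $\nu_2^\full$, $\nu_3^\full$ matter. The nonzero local factors of $\nu_2^\full$ are $1$ (only at $p^e=2$) and $2$ (at every $p\equiv1\bmod4$), and those of $\nu_3^\full$ are $1$ (only at $p^e=3$) and $2$ (at every $p\equiv1\bmod3$), so $\nu_2^\full(N)$ and $\nu_3^\full(N)$ each lie in $\{0\}\cup\{2^a:0\le a\le r-1\}$, of size $r+1$. The key observation is that the local factor of $\nu_\infty^\full$ at any prime dividing $N$ exactly once equals $2$; writing $N=H(N)\cdot M$ with $M=N/H(N)$ squarefree and coprime to $H(N)$, multiplicativity gives $\nu_\infty^\full(N)=\nu_\infty^\full(H(N))\cdot 2^{\omega(M)}$. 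Hence $\nu_\infty^\full(N)$ is determined by the pair $(H(N),\omega(M))$, which under the constraints $H(N)\le s$ and $\omega(N)<r$ (so $\omega(M)\le\omega(N)\le r-1$) ranges over a set of cardinality at most $\eta\sqrt{s}\cdot r$ by Lemma~\ref{lem:eta}. Multiplying the three bounds, the number of distinct values of the remainder is at most $\eta\sqrt{s}\,r(r+1)^2$.

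The per-prime case analyses here are routine bookkeeping from the dimension formulae of Propositions~\ref{prop:dimfull},~\ref{prop:dimnew} and Theorem~\ref{thm:dimmin}. The one step I would present with care is the identity $\nu_\infty^\full(N)=\nu_\infty^\full(H(N))\,2^{\omega(N/H(N))}$, together with the fact that $\nu_2^\n(N)$ and $\nu_3^\n(N)$ are always $0$ or $\pm$ a power of $2$ bounded by $2^{r-1}$; these are precisely what convert the combinatorial constraints $\omega(N)<r$ and $H(N)\le s$ into the stated bounds. I do not anticipate a substantive obstacle.
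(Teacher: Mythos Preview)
Your argument is correct and follows essentially the same approach as the paper: subtract the main term, observe that $\nu_\infty^\n(N)=0$ for non-square $N$ when $\n\in\{\new,\min\}$, and bound the number of possible values of each of $\nu_2^\n(N)$, $\nu_3^\n(N)$, $\mu(N)$ (resp.\ $\nu_\infty^\full(N)$, $\nu_2^\full(N)$, $\nu_3^\full(N)$) separately. The only cosmetic difference is that for the full space the paper fixes the squarefull part $N_2$ first and then counts values of $\nu_j^\full(N_1)$ for the squarefree part, whereas you bound the values of $\nu_j^\full(N)$ directly and invoke the $H(N)$ factorization only for $\nu_\infty^\full$; the bookkeeping and the resulting bound are identical.
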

\begin{proof}
For the full space, consider $N=N_1N_2$, where $N_1$ is squarefree, $N_2\le s$ is squarefull, and $(N_1,N_2)=1$. Then
\begin{equation}\label{eq:fulldiff}
\begin{aligned}
d_k^\full&(N_1N_2)-\frac{k-1}{12}\psi^\full(N_1N_2)\\
&=-\frac12\nu_\infty^\full(N_1)\nu_\infty^\full(N_2)
+c_2(k)\nu_2^\full(N_1)\nu_2^\full(N_2)
+c_3(k)\nu_3^\full(N_1)\nu_3^\full(N_2)+\delta_2(k).
\end{aligned}
\end{equation}
In view of Proposition~\ref{prop:dimfull}, when $\omega(N)<r$ there are at most $r$ possible values of $\nu_\infty^\full(N_1)$, and at most $r+1$ possible values of $\nu_2^\full(N_1)$ and $\nu_3^\full(N_1)$. By Lemma~\ref{lem:eta}, there are at most $\eta\sqrt{s}$ choices for $N_2$ when $H(N)\le s$.
This yields at most $\eta\sqrt{s}r(r+1)^2$ possibilities for the right-hand side of \eqref{eq:fulldiff}.

Similarly, for $\n\in\{\new,\min\}$ we have
\[
d_k^\n(N)-\frac{k-1}{12}\psi^\n(N)
=-\frac12\nu_\infty^\n(N)
+c_2(k)\nu_2^\n(N)
+c_3(k)\nu_3^\n(N)+\delta_2(k)\mu(N),
\]
and for $N$ with $\omega(N)<r$ and $\sqrt{N}\notin\N$, we have $\nu_\infty^\n(N)=0$ and there are at most $2r+1$ possibilities for $\nu_2^\n(N)$ and $\nu_3^\n(N)$, and at most three possibilities for $\mu(N)$.
\end{proof}

\begin{lemma}\label{lem:exceptions}
For $\n\in\{\full,\new,\min\}$ and $x>1$,
\[
\#\left\{N\in\N:\operatorname{min}\left\{\psi^\n(N),\frac{12}{k-1}d_k^\n(N)\right\}\le x
\text{ and }\Bigl(\omega(N)>3\log\log{x}\text{ or }\sqrt{N}\in\N\Bigr)\right\}
\ll\frac{x}{\log{x}}.
\]
\end{lemma}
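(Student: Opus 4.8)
The plan is to first replace the hypothesis $\min\{\psi^\n(N),\tfrac{12}{k-1}d_k^\n(N)\}\le x$ by the single cleaner condition $\psi^\n(N)\le4x$, valid for all but a finite absolute set of $N$, and then to count separately the perfect squares and the $N$ with $\omega(N)>3\log\log x$ lying in this region.

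For the reduction, I would combine the three dimension formulae (Propositions~\ref{prop:dimfull}, \ref{prop:dimnew} and Theorem~\ref{thm:dimmin}) with the bounds $0\le\nu_\infty^\n(N)\le\psi^\n(N)/\sqrt N$ and $|c_2(k)\nu_2^\n(N)+c_3(k)\nu_3^\n(N)|\le\tfrac{7}{12}2^{\omega(N)}$ of Lemma~\ref{lem:nubounds}, together with $\tfrac{6}{k-1},\tfrac{12}{k-1}\le12$ and $|\delta_2(k)|\le1$, to obtain
\[
\tfrac{12}{k-1}d_k^\n(N)\ge\psi^\n(N)-\frac{6\,\psi^\n(N)}{\sqrt N}-7\cdot2^{\omega(N)}-12
\]
uniformly in $k\in2\N$. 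If the left side is $\le x$ and $N\ge144$, then $\psi^\n(N)\le2x+14\cdot2^{\omega(N)}+24$; since $2^{\omega(N)}\ll N^{1/8}$ and, by Lemma~\ref{lem:squarefull}, $N^{1/8}\ll\psi^\min(N)^{1/4}\le\psi^\n(N)^{1/4}$, the term $14\cdot2^{\omega(N)}+24$ is absorbed into $\tfrac12\psi^\n(N)$ once $\psi^\n(N)$ exceeds an absolute constant, giving $\psi^\n(N)\le4x$; the finitely many remaining $N$ form an absolute exceptional set $B$ (again using $\psi^\min(N)\gg\sqrt N$). Since $\psi^\n(N)\le x$ trivially gives $\psi^\n(N)\le4x$, the set in the lemma lies in $B\cup\{N:\psi^\n(N)\le4x,\ \sqrt N\in\N\}\cup\{N:\psi^\n(N)\le4x,\ \omega(N)>3\log\log x\}$.

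The perfect squares are easy: if $N=m^2$ then $\psi^\n(N)\le4x$ forces $\psi^\min(m^2)\le4x$, so $m\ll x^{2/3}$ by Lemma~\ref{lem:squarefull} (with $\varepsilon=\tfrac14$), contributing $\ll x^{2/3}$. For the $N$ with many prime factors I would use Rankin's trick with $z=3$: since $3^{\omega(N)-3\log\log x}\ge1$ there,
\[
\#\{N:\psi^\n(N)\le4x,\ \omega(N)>3\log\log x\}\le3^{-3\log\log x}\sum_{\psi^\n(N)\le4x}3^{\omega(N)}.
\]
To bound the last sum, write $N=N_1N_2$ with $N_1$ squarefree, $N_2$ squarefull and $(N_1,N_2)=1$; then $\psi^\n(N)=\psi^\n(N_1)\psi^\n(N_2)$ with $\psi^\n(N_1)\ge\varphi(N_1)$, and dropping coprimality gives the upper bound $\sum_{N_2\ \mathrm{squarefull}}3^{\omega(N_2)}\sum_{N_1\ \mathrm{squarefree},\,\varphi(N_1)\le4x/\psi^\n(N_2)}3^{\omega(N_1)}$. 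Using $\varphi(n)\gg n/\log\log(3n)$ and the standard bound $\sum_{n\le t}3^{\omega(n)}\ll t(\log t)^2$, the inner sum is $\ll\tfrac{x}{\psi^\n(N_2)}\log\log(3x)(\log3x)^2$, and $\sum_{N_2\ \mathrm{squarefull}}3^{\omega(N_2)}/\psi^\n(N_2)$ converges, being at most $\prod_p\bigl(1+3\sum_{e\ge2}\psi^\n(p^e)^{-1}\bigr)$, which converges because $\psi^\n(p^e)\gg p^e$ for $e\ge2$ (for $\n=\min$ this persists despite the factor $(2,p-1,e)^{-1}$).

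Hence $\sum_{\psi^\n(N)\le4x}3^{\omega(N)}\ll x\log\log(3x)(\log x)^2$; since $3^{-3\log\log x}=(\log x)^{-3\log3}$ and $2-3\log3<-1$, the $\omega$-large contribution is $\ll x\log\log(3x)(\log x)^{2-3\log3}\ll x/\log x$ for $x\ge16$. Together with the $\ll x^{2/3}$ from the squares, the $\ll1$ from $B$, and the trivial bound for $1<x<16$ (there $\psi^\n(N)\le64$ confines $N$ to an absolute finite set), this proves the lemma. I expect the main subtleties to be keeping the reduction in the second paragraph uniform in $k$, and verifying convergence of the Euler product over squarefull $N_2$ for each of $\psi^\full,\psi^\new,\psi^\min$ separately.
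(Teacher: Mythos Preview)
Your argument is correct. The reduction to $\psi^\n(N)\le 4x$ is carried out carefully and uniformly in $k$, the perfect-square contribution is bounded cleanly by $O(x^{2/3})$, and the Rankin estimate with $z=3$ together with the convergence of $\sum_{N_2\ \text{squarefull}}3^{\omega(N_2)}/\psi^\n(N_2)$ handles the $\omega$-large part with room to spare, since $3\log 3-2\approx 1.296>1$.

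The paper follows a somewhat different tactical route after the same reduction. Rather than apply Rankin's trick directly, it splits off the squarefull part $N_2$ at the threshold $\log^3 x$: for $N_2>\log^3 x$ it invokes the tail bound of Lemma~\ref{lem:squarefull}, while for $N_2\le\log^3 x$ it observes that $\omega(N_2)=o(\log\log x)$, so $\omega(N_1)>2.9\log\log(\,\cdot\,)$, and then cites Ford's large-deviation lemma \cite[Lemma~2.2]{Ford2}, obtaining the exponent $2.9\log 2-1\approx 1.01$. Perfect squares are caught by the case $N=N_2\le\log^3 x$. Your version is more self-contained (no external large-deviation lemma is needed, only the textbook estimate $\sum_{n\le t}3^{\omega(n)}\ll t(\log t)^2$) and yields a more comfortable exponent margin; the paper's version, on the other hand, re-uses machinery already imported from \cite{Ford2}. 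Both are short and either would serve.
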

\begin{proof}
From Lemmas~\ref{lem:nubounds} and \ref{lem:squarefull}, we have $d_k^\n(N) = \frac{k-1}{12} \psi^\n(N)+O(N^{\frac{1}{2}+\varepsilon})$, 
and thus
\[
\min\left\{\psi^\n(N), \frac{12}{k-1}d_k^\n (N)\right\}\leq x \implies
\psi^\n(N) \leq x+ O\bigl(x^{\frac{1}{2}+\varepsilon}\bigr).\]
Write $N = N_1N_2$ with $N_1$ squarefree, $N_2$ squarefull, and $(N_1, N_2)=1$.
Since $\psi^\n$ is multiplicative and $N_1$ is squarefree, for $x\ge3$ and a suitable constant $A>0$, we have 
\[\varphi(N_1)\le\psi^\n(N_1) \le \frac{x+O\bigl(x^{\frac12+\varepsilon}\bigr)}{\psi^\n(N_2)}
\implies N_1\le\frac{Ax\log\log{x}}{\psi^\n(N_2)}.\]

We first count the number of $N$ with $N_2 > \log^3 x$.
By Lemma \ref{lem:squarefull} we have 
\begin{align*}
\#\left\{N=N_1N_2 \in \N:\, N_1 \le \frac{Ax\log\log{x}}{\psi^\n(N_2)} \text{ and } N_2 > \log^3x \right\}
&\le \sum_{N_2>\log^3x}\frac{Ax\log\log x}{\psi^\n(N_2)}\\
&\ll \frac{x(\log\log x)^2 }{(\log x)^{\frac{3}{2}}},
\end{align*}
so these make a negligible contribution.
Next note that if $\sqrt{N}\in\N$ then $N=N_2$, so the number of such $N$ with $N_2\le\log^3{x}$ at most $(\log{x})^{\frac32}$, which is again negligible.

Finally, suppose $\omega(N)>3\log\log{x}$ and $N_2\leq \log^3{x}$. Then $\omega(N_2) \ll \frac{\log\log x}{\log\log \log x}$, so for sufficiently large $x$ we have 
\[\omega(N_1) > 3\log\log{x}-\omega(N_2) > 2.9\log\log\left(\frac{Ax\log\log{x}}{\psi^\n(N_2)}\right). \]
Applying \cite[Lemma 2.2]{Ford2} and Lemma~\ref{lem:squarefull}, we have
\begin{align*}
\sum_{N_2\le\log^3{x}}
\#\left\{N_1\in\N:N_1\le\frac{Ax\log\log x}{\psi^\n(N_2)} \text{ and } \omega(N_1) > 2.9\log\log\left(\frac{Ax\log\log{x}}{\psi^\n(N_2)}\right)\right\}\\
\ll\frac{x(\log\log{x})^2}{(\log{x})^{2.9\log{2}-1}}\sum_{N_2\le\log^3{x}}\frac1{\psi^\n(N_2)}
\ll\frac{x(\log\log{x})^2}{(\log{x})^{2.9\log{2}-1}}.
\end{align*}
Since $2.9\log{2}-1=1.01012\ldots>1$, this is $O\bigl(\frac{x}{\log{x}}\bigr)$, as claimed.
\end{proof}

\begin{proposition}\label{prop:ford}
Let
\[
V_{\psi^\n}(x)=\#\bigl\{\psi^\n(N):N\in\N,\,\psi^\n(N)\le x\bigr\}
\quad\text{for }\n\in\{\full,\new,\min\}
\]
and
\[
\rho(x)=\frac{1}{\log{x}}
\exp\!\left(C\log^2\!\left(\frac{\log\log{x}}{\log\log\log{x}}\right)
+D\log\log\log{x}+(D+\tfrac12-2C)\log\log\log\log{x}
\right),
\]
where $C$ and $D$ are as defined in \cite[(1.5) and (1.6)]{Ford2}. Then
\[
V_{\psi^\n}(x)\asymp x\rho(x)
\quad\text{for }x\ge16.
\]
\end{proposition}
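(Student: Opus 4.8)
The strategy is to reduce to the classical functions $\varphi$ and $\sigma$ (the sum‑of‑divisors function) and then invoke Ford's work. The elementary key is that on squarefree integers the functions $\psi^\n$ simplify completely: since $\psi^\new(p)=\psi^\min(p)=p-1$ and $\psi^\full(p)=p+1$ for every prime $p$, multiplicativity gives
\[
\psi^\new(N)=\psi^\min(N)=\varphi(N),\qquad\psi^\full(N)=\sigma(N)\qquad(N\text{ squarefree}).
\]
Write $V^{\mathrm{sf}}_{\psi^\n}(x)$ for the number of distinct values $\psi^\n(N)\le x$ with $N$ ranging over squarefree integers. Then $V^{\mathrm{sf}}_{\psi^\new}(x)=V^{\mathrm{sf}}_{\psi^\min}(x)\le V_\varphi(x)$, $V^{\mathrm{sf}}_{\psi^\full}(x)\le V_\sigma(x)$, and trivially $V^{\mathrm{sf}}_{\psi^\n}(x)\le V_{\psi^\n}(x)$. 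Since for $16\le x\le x_0$ both sides of the claimed estimate lie between positive constants, we may assume $x$ large throughout.

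For the lower bound $V_{\psi^\n}(x)\gg x\rho(x)$ it suffices to bound $V^{\mathrm{sf}}_{\psi^\n}(x)$ from below, i.e.\ to count values of $\varphi$ and of $\sigma$ having a squarefree preimage. This is precisely where \cite[Theorem~14]{Ford2} enters: the lower‑bound constructions there produce $\gg x\rho(x)$ distinct values of $\varphi$ (resp.\ $\sigma$) realized by products of distinct primes, so $V^{\mathrm{sf}}_\varphi(x),\,V^{\mathrm{sf}}_\sigma(x)\gg x\rho(x)$.

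For the upper bound, write $N=N_1N_2$ with $N_1$ squarefree, $N_2$ squarefull and $(N_1,N_2)=1$, so $\psi^\n(N)=\psi^\n(N_1)\psi^\n(N_2)$. Sorting the values $\psi^\n(N)\le x$ by $N_2$ gives
\[
V_{\psi^\n}(x)\;\le\;\sum_{\substack{N_2\text{ squarefull}\\\psi^\n(N_2)\le x}}V^{\mathrm{sf}}_{\psi^\n}\!\left(\frac{x}{\psi^\n(N_2)}\right)\;\le\;\sum_{\substack{N_2\text{ squarefull}\\\psi^\n(N_2)\le x}}W\!\left(\frac{x}{\psi^\n(N_2)}\right),
\]
where $W$ is $V_\varphi$ or $V_\sigma$. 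By \cite[Theorem~14]{Ford2}, $W(y)\asymp y\rho(y)$ for $y$ large, and a direct inspection of the definition of $\rho$ shows $\rho(x/t)\asymp\rho(x)$ uniformly for $1\le t\le\log^3x$ (indeed $\log(x/t)=(1+o(1))\log x$ and $\log\log(x/t)=\log\log x+o(1)$ in this range, so the higher iterated logarithms are essentially unchanged). Hence the terms with $\psi^\n(N_2)\le\log^3x$ contribute $\ll x\rho(x)\sum_{N_2\text{ squarefull}}\psi^\n(N_2)^{-1}\ll x\rho(x)$, the series converging by Lemma~\ref{lem:squarefull}; and for $\psi^\n(N_2)>\log^3x$ we bound $W(y)\le y$ and use the tail estimate of Lemma~\ref{lem:squarefull} together with $\psi^\n(N_2)\ll_\varepsilon N_2^{1+\varepsilon}$ (Lemma~\ref{lem:squarefull} again, via $\psi^\n\le\psi^\full$) to see these terms contribute $\ll x/(\log x)^{1+\delta}\ll x\rho(x)$ for some $\delta>0$, using $\rho(x)\gg1/\log x$. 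Combining, $V_{\psi^\n}(x)\ll x\rho(x)$, and with the lower bound this gives $V_{\psi^\n}(x)\asymp x\rho(x)$.

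The one genuinely delicate step is the appeal to \cite[Theorem~14]{Ford2} in the lower bound: one must know that Ford's method yields the full‑order count $\gg x\rho(x)$ of totient and sigma values with a \emph{squarefree} preimage — equivalently, that his general value‑distribution theorem applies to $\varphi$ and $\sigma$ restricted to squarefree arguments (or directly to the multiplicative functions $\psi^\n$). Everything else is routine bookkeeping: the multiplicative reduction to squarefree $N$, the convergence and tail bounds for $\sum_{N\text{ squarefull}}\psi^\n(N)^{-1}$ supplied by Lemma~\ref{lem:squarefull}, and elementary size estimates for $\rho$.
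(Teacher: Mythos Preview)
Your argument is correct, but it takes a detour that the paper avoids. The paper's proof is a one-line direct application of \cite[Theorem~14]{Ford2} to each $\psi^\n$: one simply checks the hypotheses of that general theorem, namely that $\{\psi^\n(p)-p\}$ is a singleton not containing $0$ (here $\{\pm1\}$), and that $\sum_{N\text{ squarefull}}N^\delta/\psi^\n(N)$ converges for $\delta<\tfrac12$ (Lemma~\ref{lem:squarefull}). No squarefree/squarefull decomposition, no reduction to $\varphi$ or $\sigma$, is needed.

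Your route---reduce to $\varphi$ and $\sigma$ on squarefree arguments, then recover the upper bound by a squarefull sieve---works, but forces you to invoke the extra fact that Ford's lower-bound construction produces values with \emph{squarefree} preimages. You correctly flag this as the delicate point; and indeed the paper later (in the proof of Theorem~\ref{thm:main} for the full space) notes that this squarefree feature is ``not stated outright'' in \cite[Theorem~14]{Ford2} and goes to some trouble to circumvent it. Amusingly, the alternative you mention in your final parenthetical---applying Ford's theorem directly to $\psi^\n$---is exactly the paper's approach, and it sidesteps the squarefree issue entirely for this proposition. So your proof is valid but self-imposes a difficulty the direct route does not have.
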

\begin{proof}
This follows from \cite[Theorem~14]{Ford2}. To verify the hypotheses, note that $\{\psi^\n(p)-p:p\text{ prime}\}$ is a singleton set (either $\{1\}$ or $\{-1\}$) not containing $0$, and that $\sum_{N\text{ squarefull}}\frac{N^\delta}{\psi^\n(N)}$ converges for any $\delta<\frac12$, by Lemma~\ref{lem:squarefull}.
\end{proof}

With these ingredients in place, we may complete the proof of Theorem~\ref{thm:main}. We begin with $\n\in\{\new,\min\}$, which are a bit easier since $\nu_\infty^\n(N)=0$ when $\sqrt{N}\notin\N$.

Let $x>0$ be a large real number and consider $N\in\N$ such that $\sqrt{N}\notin\N$, $\omega(N)\le3\log\log{x}$, and $d_k^\n(N)\le\frac{k-1}{12}x$. From Lemmas~\ref{lem:squarefull} and \ref{lem:nubounds} we see that
\[
\Delta\coloneq\frac{12}{k-1}d_k^\n(N)-\psi^\n(N)
\ll_\varepsilon\frac{x^{\frac12+\varepsilon}}{k},
\]
so for large enough $x$ we have $|\Delta|\le\frac{x}{2}$. Moreover, Lemma~\ref{lem:values} implies that $\Delta$ assumes $O((\log\log{x})^2)$ values as $N$ varies, with an implied constant that is independent of $k$. Adding in the contribution from Lemma~\ref{lem:exceptions}, we therefore have
\[
D_k^\n(x)\ll (\log\log{x})^2 V_{\psi^\n}(3x/2)+\frac{x}{\log{x}}
\]
and
\[
V_{\psi^\n}(x/2)\ll(\log\log{x})^2 D_k^\n(x)+\frac{x}{\log{x}}.
\]
In view of Proposition~\ref{prop:ford}, it follows that
\[
D_k^\n(x)=
\frac{x}{\log{x}}
\exp\!\left(C\log^2\!\left(\frac{\log\log{x}}{\log\log\log{x}}\right)+O\bigl(\log\log\log{x}\bigr)\right)
\]
for all sufficiently large $x$. Finally, note that $D_k(x)\ge1$ for $x\ge\frac{12}{11}$, so we can take the implied constant large enough (and uniform in $k$) to cover all $x\ge16$.

For the full space, first note that $\nu_\infty^\full(N)\ge N$, so by Lemma~\ref{lem:nubounds}, we have
\[
d_k^\full(N)\ge\psi^\full(N)\left(\frac{k-1}{12}-\frac1{2\sqrt{N}}\right)-\frac{7}{12}2^{\omega(N)}
\ge\frac{k-1}{12}N-O\bigl(\sqrt{N}\bigr).
\]
Therefore, if $d_k^\full(N)\le\frac{k-1}{12}x$ then $N\le x+O(\sqrt{x})$. Now the idea is to write $N=N_1N_2$, where $N_1$ is squarefree, $N_2$ is squarefull, and $(N_1,N_2)=1$.
The total number of $N=N_1N_2\le x+O(\sqrt{x})$ with $N_2>\log^2{x}$ is at most
\[
\sum_{\substack{N_2\text{ squarefull}\\N_2>\log^2{x}}}
\frac{x+O(\sqrt{x})}{N_2}
\ll\frac{x}{\log{x}}.
\]
For fixed $N_2\le\log^2{x}$ we apply the preceding argument (now with factors of $(\log\log{x})^3$ to account for the higher power of $r$ in Lemma~\ref{lem:values}) to prove
\begin{align*}
\#&\left\{d_k^\full(N_1N_2):N_1\text{ squarefree},\,(N_1,N_2)=1,\,\omega(N_1N_2)\le3\log\log{x},\,d_k^\full(N_1N_2)\le\frac{k-1}{12}x\right\}\\
&\ll\frac{x}{N_2}\rho\!\left(\frac{x}{N_2}\right)
(\log\log{x})^3
=\frac{x\rho(x)}{N_2}\exp(O(\log\log\log{x})).
\end{align*}
Summing over $N_2\le\log^2{x}$ and adding the contributions from $N_2>\log^2{x}$ and $N>3\log\log{x}$ gives the upper bound.

For the lower bound, we could take $N_2=1$ and use an estimate for the value distribution of $\psi^\full(N)$ restricted to squarefree $N$. Although it is not stated outright in \cite[Theorem~14]{Ford2}, the proof of the lower bound requires only squarefree $N$. However, we can circumvent this assumption and rely only on the stated result using sufficiently large bounded values of $N_2$, as follows.

For $s\in\N$, we wish to derive a lower estimate for
\[
V_{\psi^\full}^s(x)=\#\bigl\{\psi^\full(N):N\in\N,\,H(N)\le s,\,\psi^\full(N)\le x\bigr\}.
\]
By Proposition~\ref{prop:ford}, there are constants $\alpha,\beta>0$ such that
\[
\alpha x\rho(x)\le V_{\psi^\full}(x)\le\beta x\rho(x)
\quad\text{for }x\ge16.
\]
Considering $N=N_1N_2$, for large $x$ we have
\begin{align*}
V_{\psi^\full}^s(x)&\ge V_{\psi^\full}(x)-\sum_{\substack{N_2\text{ squarefull}\\s<N_2\le\log^2{x}}}
V_{\psi^\full}\!\left(\frac{x}{N_2}\right)
-\sum_{\substack{N_2\text{ squarefull}\\N_2>\log^2{x}}}\frac{x}{N_2}\\
&\ge\alpha x\rho(x)-\sum_{\substack{N_2\text{ squarefull}\\s<N_2\le\log^2{x}}}
\beta\frac{x}{N_2}\rho\!\left(\frac{x}{N_2}\right)
-\frac{2\eta x}{\log{x}}\\
&\ge\alpha x\rho(x)-\frac{2\eta\beta}{\sqrt{s}}x\rho(x)\left(1+O\!\left(\frac{\log\log{x}}{\log{x}}\right)\right)-\frac{2\eta x}{\log{x}}.
\end{align*}
Choosing $s>(4\eta\beta/\alpha)^2$, this is at least $\frac12\alpha x\rho(x)$ for sufficiently large $x$.

Finally, as before we have
\[
V_{\psi^\full}^s(x/2)
\ll_s(\log\log{x})^3 D_k^\full(x)+\frac{x}{\log{x}},
\]
and this completes the proof.

\thispagestyle{empty}
{%\footnotesize
%\nocite{*}
\bibliographystyle{alpha}
\bibliography{reference}
}
\end{document}